\title[Relaxations of associativity and preassociativity]{Relaxations of associativity and preassociativity for variadic functions}
\author{Miguel Couceiro}
\address{LORIA \\ (CNRS - Inria Nancy Grand Est - Universit\'e de Lorraine)\\
BP239~-~ 54506 Vandoeuvre-l\`es-Nancy, France} \email{miguel.couceiro[at]inria.fr }
\author{Jean-Luc Marichal}
\address{Mathematics Research Unit, FSTC, University of Luxembourg \\
6, rue Coudenhove-Kalergi, L-1359 Luxembourg, Luxembourg} \email{jean-luc.marichal[at]uni.lu }
\author{Bruno Teheux}
\address{Mathematics Research Unit, FSTC, University of Luxembourg \\
6, rue Coudenhove-Kalergi, L-1359 Luxembourg, Luxembourg} \email{bruno.teheux[at]uni.lu }
\date{August 4, 2015}
\begin{document}

\theoremstyle{plain}
\newtheorem{theorem}{Theorem}[section]% Supprimer [section] pour une numérotation linéaire
\newtheorem{lemma}[theorem]{Lemma}
\newtheorem{proposition}[theorem]{Proposition}
\newtheorem{corollary}[theorem]{Corollary}
\newtheorem{fact}[theorem]{Fact}
\newtheorem*{main}{Main Theorem}

\theoremstyle{definition}
\newtheorem{definition}[theorem]{Definition}
\newtheorem{example}[theorem]{Example}
\newtheorem{problem}[theorem]{Problem}

\theoremstyle{remark}
\newtheorem*{conjecture}{Conjecture}
\newtheorem{remark}{Remark}
\newtheorem{claim}{Claim}

\newcommand{\N}{\mathbb{N}}
\newcommand{\Q}{\mathbb{Q}}
\newcommand{\R}{\mathbb{R}}

\newcommand{\ran}{\mathrm{ran}}
\newcommand{\dom}{\mathrm{dom}}
\newcommand{\id}{\mathrm{id}}
\newcommand{\med}{\mathrm{med}}
\newcommand{\Ast}{\boldsymbol{\ast}}
\newcommand{\Cdot}{\boldsymbol{\cdot}}

\newcommand{\bfu}{\mathbf{u}}
\newcommand{\bfv}{\mathbf{v}}
\newcommand{\bfw}{\mathbf{w}}
\newcommand{\bfx}{\mathbf{x}}
\newcommand{\bfy}{\mathbf{y}}
\newcommand{\bfz}{\mathbf{z}}

\newcommand{\calA}{\mathcal{A}}
\newcommand{\calP}{\mathcal{P}}
\newcommand{\calI}{\mathcal{I}}

\begin{abstract}
In this paper we consider two properties of variadic functions, namely associativity and preassociativity, that are pertaining to several data and language processing tasks. We propose parameterized relaxations of these properties and provide their descriptions in terms of factorization results. We also give an example where these parameterized notions give rise to natural hierarchies of functions and indicate their potential use in measuring the degrees of associativeness and preassociativeness. We illustrate these results by several examples and constructions and discuss some open problems that lead to further directions of research.
\end{abstract}

\keywords{Associativity, Preassociativity, Variadic function, String function, Functional equation, Axiomatization}

\subjclass[2010]{20M05, 20M32, 39B72, 68R99}

\maketitle

%---------------------------------------------------------------------------------------------- Section
\section{Introduction}

Let $X$ be an arbitrary nonempty set, called the \emph{alphabet}, and its elements are called \emph{letters}. The symbol $X^*$ stands for the set $\bigcup_{n \geqslant 0} X^n$ of all tuples on $X$, and its elements are called \emph{strings}, where the empty string $\varepsilon$ is such that $X^0=\{\varepsilon\}$. We denote the elements of $X^*$ by bold roman letters $\bfx$, $\bfy$, $\bfz$, $\ldots$ If we want to stress that such an element is a letter of $X$, we use non-bold italic letters $x$, $y$, $z$, $\ldots$ We assume that $X^*$ is endowed with the concatenation operation (the empty string $\varepsilon$ being the neutral element) for which we adopt the juxtaposition notation.  For instance, if $\bfx\in X^m$ and $y\in X$, then $\bfx y\varepsilon=\bfx y\in X^{m+1}$. For every string $\bfx$ and every integer $n\geqslant 0$, the power $\bfx^n$ stands for the string obtained by concatenating $n$ copies of $\bfx$. In particular, we have $\bfx^0=\varepsilon$. The \emph{length} of a string $\bfx$ is denoted by $|\bfx|$. In particular, we have $|\varepsilon|=0$.

Let $Y$ be a nonempty set. Recall that a function $F\colon X^*\to Y$ is said to be \emph{variadic} and that, for every integer $n\geqslant 0$, a function $F\colon X^n\to Y$ is said to be \emph{$n$-ary}. A unary operation on $X^*$ is a particular variadic function $F \colon X^* \to X^*$ called a \textit{string function} over the alphabet $X$.

\begin{definition}\label{de:AssStr}
A function $F\colon X^*\to X^*$ is said to be \emph{associative} \cite{LehMarTeh14} if for any $\bfx,\bfy,\bfz \in X^*$, we have
\begin{equation}\label{eq:assoc}
F(\bfx \bfy \bfz) ~=~ F(\bfx F(\bfy) \bfz){\,}.
\end{equation}
A function $F\colon X^*\to Y$ is said to be \emph{preassociative} \cite{MarTeh14,MarTeh14b} if for any $\bfx,\bfy,\bfy',\bfz \in X^*$, we have
\begin{equation}\label{eq:preassoc}
F(\bfy) ~=~ F(\bfy')\quad\Rightarrow\quad F(\bfx\bfy\bfz) ~=~ F(\bfx\bfy'\bfz){\,}.
\end{equation}
\end{definition}

Associative string functions and preassociative variadic functions as well as some of their variants have been studied in \cite{LehMarTeh14,MarMatTom,MarTeh14,MarTeh14b,MarTeh15,MarTeh15b}. For instance, it has been shown \cite{LehMarTeh14} that a function $F\colon X^*\to X^*$ is associative if and only if it is preassociative and satisfies the condition $F=F\circ F$. Also, under the Axiom of Choice, a function $F\colon X^*\to Y$ is preassociative if and only if it can be written as a composition of the form $F=f\circ H$, where $H\colon X^*\to X^*$ is associative and $f\colon\ran(H)\to Y$ is one-to-one.

It is noteworthy that several data processing tasks correspond to associative and preassociative functions. For instance, the function which corresponds to sorting the letters of every string in alphabetical order is associative. Similarly, the function that transforms a string of letters into upper case is also associative. Another natural example of a preassociative function is the mapping that outputs the length of strings.

In this paper we introduce and study certain relaxations of associativity and preassociativity. Let $\calA$ denote the class of associative string functions on $X^*$ and let $\calP$ denote the class of preassociative variadic functions on $X^*$. For a fixed nonempty subset $D$ of $X^*$, define the following classes of functions:
\begin{eqnarray*}
\calA_D &=& \{F\colon X^*\to\ran(F)\mid\text{$F(D)\subseteq X^*$ and \eqref{eq:assoc} holds for all $\bfx,\bfy,\bfz \in X^*$ such that $\bfy\in D$}\},\\
\calA'_D &=& \{F\colon X^*\to\ran(F)\mid\text{$F(D)\subseteq X^*$ and \eqref{eq:assoc} holds for all $\bfx,\bfy,\bfz \in X^*$ such that $\textstyle{F(\bfy)\in F(D)}$}\},\\
\calP_D &=& \{F\colon X^*\to\ran(F)\mid\text{\eqref{eq:preassoc} holds for all $\bfx,\bfy,\bfy',\bfz \in X^*$ such that $\bfy,\bfy'\in D$}\},\\
\calP'_D &=& \{F\colon X^*\to\ran(F)\mid\text{\eqref{eq:preassoc} holds for all $\bfx,\bfy,\bfy',\bfz \in X^*$ such that $\bfy\in D$}\}.
\end{eqnarray*}

It is clear that $\calA_{X^*}=\calA'_{X^*}=\calA$ and $\calP_{X^*}=\calP'_{X^*}=\calP$. When $D\varsubsetneq X^*$, these classes of functions correspond to relaxations of associativity and preassociativity for which we have $\calA'_D\subseteq\calA_D$ and $\calP'_D\subseteq\calP_D$. For instance, functions $F\colon X^*\to X^*$ that are in $\calA'_D$ are characterized by the fact that for any $\bfx,\bfy,\bfz\in X^*$ the value $F(\bfx\bfy\bfz)$ can be replaced with $F(\bfx F(\bfy)\bfz)$ whenever $F(\bfy)=F(\bfy')$ for some $\bfy'\in D$.

Certain of these relaxations are particularly natural. For instance, consider the subset
$$
D ~=~ \{x^n\mid x\in X,~n\in\N\}{\,},
$$
where $\N$ denotes the set of nonnegative integers. Any function $F\colon X^*\to X^*$ in $\calA_D$ has the property that the value $F(\bfx\bfy\bfz)$ can be replaced with $F(\bfx F(\bfy)\bfz)$ whenever $\bfy$ is a repeated letter. Further examples include:
\begin{itemize}
\item $D=\{\bfx\in X^*\mid ~|\bfx|\leqslant m\}$ for some integer $m\geqslant 0$,
\item $D=\{\bfx\in X^*\mid ~|\bfx|\geqslant m\}$ for some integer $m\geqslant 0$,
\item $D=X^*\bfw X^*=\{\bfx\bfw\bfx'\mid\bfx,\bfx'\in X^*\}$ for a given $\bfw\in X^*$,
\item $D=\{x\in\R\mid ~ x\leqslant s\}$ for some threshold $s\in\R$ (observe that $D\varsubsetneq\R\varsubsetneq\R^*$).
\end{itemize}

The function classes defined above can be motivated by indexation techniques in natural language processing (NLP) as they include noteworthy examples such as the Soundex encoding and its variants (see, e.g., \cite{JurMar08,Knu98}).

\begin{example}\label{ex:7sfd5}
Let $X=\{a,b,\ldots,z\}$, let $\bfw\in X^*$, and let $D=X^*\bfw X^*$. Consider also $F\colon X^*\to X^*$ defined by $F(\bfx)=\bfw$ if $\bfx\in X^*\bfw X^*$, and $F(\bfx)=\varepsilon$, otherwise. It is easy to see that $F$ is in $\calA'_D$. However, it is not in $\calA$ unless $|\bfw|\leqslant 1$. For example, if $\bfw = ab$, then $F(ab)=ab\neq\varepsilon =F(F(a)b)$.
\end{example}% D ne peut contenir epsilon: Sinon, prenons x=y'=epsilon, y="he", z="llo".

%\begin{example}\label{ex:7sfd52}
%;;; $m$-grams ;;; + ref.
%\end{example}

%\begin{fact}\label{fact:11}
%For every family $\mathcal{F}$ of nonempty subsets of $X^*$ such that $\bigcup_{D\in\mathcal{F}}D=X^*$, we have
%$$
%\calA ~=~ \bigcap_{D\in\mathcal{F}}\calA_D  ~=~ \bigcap_{D\in\mathcal{F}}\calA'_D{\,},\quad \calP ~\subset ~ \bigcap_{D\in\mathcal{F}}\calP_D{\,}, \quad\text{and}\quad \calP   ~=~ \bigcap_{D\in\mathcal{F}}\calP'_D{\,}.
%$$
%\end{fact}

\begin{fact}\label{fact:12}
For any nonempty subsets $D_1$ and $D_2$ of $X^*$ such that $D_1\subseteq D_2$, the following inclusions hold:
\begin{eqnarray*}
\calA_{D_2} ~\subseteq ~ \calA_{D_1}{\,},&&\quad \calA'_{D_2} ~\subseteq ~ \calA'_{D_1}{\,},\\
\calP_{D_2} ~\subseteq ~ \calP_{D_1}{\,},&&\quad \calP'_{D_2} ~\subseteq ~ \calP'_{D_1}{\,}.
\end{eqnarray*}
\end{fact}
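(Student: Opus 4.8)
The plan is to observe that all four inclusions are instances of a single anti-monotonicity principle: each of the four classes is defined by a universally quantified condition (an equation or an implication) whose scope is cut down by membership in $D$, and shrinking $D$ can only shrink that scope, hence enlarge the class. So in every case I would fix an arbitrary $F$ in the class indexed by $D_2$ and check directly that $F$ satisfies the defining condition for $D_1$, using the single ingredient $D_1\subseteq D_2$.

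For $\calP_D$ and $\calP'_D$ this is immediate. If $F\in\calP_{D_2}$, then \eqref{eq:preassoc} holds for all $\bfx,\bfy,\bfy',\bfz$ with $\bfy,\bfy'\in D_2$; since $D_1\subseteq D_2$, any quadruple with $\bfy,\bfy'\in D_1$ in particular satisfies $\bfy,\bfy'\in D_2$, so the implication is inherited and $F\in\calP_{D_1}$. The argument for $\calP'_D$ is identical, now with the single hypothesis $\bfy\in D_1\subseteq D_2$. For $\calA_D$ one treats the equation \eqref{eq:assoc} in exactly the same way, but there is also the side condition $F(D)\subseteq X^*$ to verify; this passes from $D_2$ to $D_1$ because $D_1\subseteq D_2$ forces $F(D_1)\subseteq F(D_2)\subseteq X^*$.

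The only case that calls for a moment of care is $\calA'_D$, since there the scope of \eqref{eq:assoc} is governed by the condition $F(\bfy)\in F(D)$ rather than by $\bfy\in D$ directly. The key observation is again that $D_1\subseteq D_2$ yields $F(D_1)\subseteq F(D_2)$; consequently $F(\bfy)\in F(D_1)$ implies $F(\bfy)\in F(D_2)$, so every triple $(\bfx,\bfy,\bfz)$ that activates the equation for $D_1$ already activates it for $D_2$, whence the equation is inherited. Combined with $F(D_1)\subseteq F(D_2)\subseteq X^*$, this gives $F\in\calA'_{D_1}$. I do not anticipate any genuine obstacle: the whole statement is a direct unwinding of the definitions, and the sole non-automatic point is routing the $\calA'_D$ argument through the images $F(D_1)\subseteq F(D_2)$ rather than through the sets $D_1\subseteq D_2$ themselves.
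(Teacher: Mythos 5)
Your proof is correct. The paper states this as a \emph{Fact} without giving any proof, and your direct unwinding of the definitions---including the one genuinely non-automatic point, namely that for $\calA'_D$ the anti-monotonicity is routed through the images via $F(D_1)\subseteq F(D_2)$ rather than through $D_1\subseteq D_2$ itself---is exactly the intended justification.
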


\begin{problem}
Give necessary and sufficient conditions on $D$ for the inclusions $\calA'_D\subseteq\calA_D$ and $\calP'_D\subseteq\calP_D$ to be strict, and similarly for the inclusions in Fact~\ref{fact:12}.
\end{problem}

The outline of this paper is as follows. In Section 2 we focus on the special case when $D$ is the set of the strings over $X$ whose lengths are bounded above by a fixed integer $m\geqslant 0$. We describe a couple of examples (Examples \ref{ex:2.ex1}, \ref{ex:2.ex2}, \ref{ex:e675}), which show that in this case the inclusions given in Fact~\ref{fact:12} are strict, thus giving rise to hierarchies of nested classes of functions.
%Such examples appear naturally for instance in NLP as they model translation and indexation processes.
In Section 3 we present several factorization results. In particular, we identify associative functions within the class of preassociative functions, and extend these results to classes of `range-determined' functions in Section 4. The potential use of hierarchies in measuring associativeness and preassociativeness is then illustrated in Section 5, together with some open problems that constitute topics of current research. Other noteworthy questions are mentioned throughout the paper.

We use the following notation. The set $\{0,1,2,\ldots\}$ of nonnegative integers is denoted by $\N$. The domain and range of any function $f$ are denoted by $\dom(f)$ and $\ran(f)$, respectively. The identity function on any nonempty set $E$ is denoted by $\id_E$. For any function $F\colon X^*\to Y$ and any integer $n\geqslant 0$, we denote by $F_n$ the \emph{$n$-ary part} of $F$, i.e., the restriction $F|_{X^n}$ of $F$ to the set $X^n$.

Recall that a function $g$ is a \emph{quasi-inverse} \cite[Sect.~2.1]{SchSkl83} of a function $f$ if
$$
f\circ g|_{\ran(f)}=\id_{\ran(f)}\qquad\mbox{and}\qquad\ran(g|_{\ran(f)})=\ran(g).
$$
In this case we have $\ran(g)\subseteq\dom(f)$ and the function $g|_{\ran(f)}$ is one-to-one. Denote the set of quasi-inverses of a function $f$ by $Q(f)$. Under the Axiom of Choice (AC), the set $Q(f)$ is nonempty for any function $f$. In fact, AC is just another form of the statement ``every function has a quasi-inverse''. Note also that the relation of being quasi-inverse is symmetric: if $g \in Q(f)$ then $f \in Q(g)$.

%---------------------------------------------------------------------------------------------- Section
\section{The case of bounded strings}

In this section we consider the special case when the set $D$ consists of strings whose lengths are bounded above by a given integer $m\geqslant 0$. Denote this set by $D_m$, i.e.,
$$
D_m ~=~ \{\bfx\in X^*\mid ~|\bfx|\leqslant m\}.
$$
%The strings in $D_m$ are also referred to as $m$-grams in the terminology of NLP.

From Fact \ref{fact:12} we immediately derive the inclusions
\begin{eqnarray*}
\calA_{D_{m+1}} ~\subseteq ~ \calA_{D_m}{\,},&&\quad \calA'_{D_{m+1}} ~\subseteq ~ \calA'_{D_m} ~\subseteq ~\calA_{D_m}{\,},\label{eq:75f1}\\
\calP_{D_{m+1}} ~\subseteq ~ \calP_{D_m}{\,},&&\quad \calP'_{D_{m+1}} ~\subseteq ~ \calP'_{D_m} ~\subseteq ~\calP_{D_m}{\,},\label{eq:75f2}
\end{eqnarray*}
as well as the equalities
$$
\calA ~=~ \bigcap_{m\geqslant 0}\calA'_{D_m} ~=~ \bigcap_{m\geqslant 0}\calA_{D_m}\quad\text{and}\quad \calP ~=~ \bigcap_{m\geqslant 0}\calP'_{D_m} ~=~ \bigcap_{m\geqslant 0}\calP_{D_m}.
$$

We now prove that each of these inclusions is actually strict, thus showing that these classes give rise to hierarchies of supersets of associative and preassociative functions.

Let $m\geqslant 0$ be an integer. We observe that any function $F\colon X^*\to X^*$ such that $F_k=\id_{X^k}$ for $k=0,\ldots,m$ is necessarily in $\calA_{D_m}$. However, the converse does not hold. For instance, the function $F\colon\N^*\to\N\cup\{\varepsilon\}$ defined by $F(\varepsilon)=\varepsilon$ and $F(\bfx)=|\bfx|$ for every $\bfx\in\N^*\setminus\{\varepsilon\}$ is in $\calA_{D_1}$ and its unary part $F_1=1$ is constant.

More generally, we also observe that any function $F\colon X^*\to X^*$ such that $F_k=\id_{X^k}$ for $k=0,\ldots,m$ and that satisfies the condition
\begin{equation}\label{eq:inter4}
F(\bfy) ~\in ~ \bigcup_{k=0}^m\ran(F_k) \quad\Leftrightarrow\quad |\bfy|\leqslant m
\end{equation}
%\begin{equation}\label{eq:inter4}
%\bigcup_{k=0}^mF^{-1}\big(\ran(F_k)\big) ~\subseteq ~ \bigcup_{k=0}^m X^k
%\end{equation}
is in $\calA'_{D_m}$. As a particular case, take $F_k=\id_{X^k}$ for $k=0,\ldots,m$ and $|F(\bfx)|>m$ for every $\bfx\in X^*$ such that $|\bfx|>m$. The following example illustrates this case and shows that $\calA_{D_{m+1}}\varsubsetneq\calA_{D_m}$ and $\calA'_{D_{m+1}}\varsubsetneq\calA'_{D_m}$.

\begin{example}\label{ex:2.ex1}
Let $m\geqslant 0$ be an integer and consider the string function $F\colon X^*\to X^*$ that transforms a string of letter into its prefix of length $m$. That is, the $k$-ary part $F_k$ of $F$ is defined by
$$
F_k(\bfx) ~=~ F_k(x_1\cdots{\,}x_k) ~=~
\begin{cases}
\bfx{\,}, & \text{if $k\leqslant m$},\\
x_1\cdots{\,}x_m{\,}, & \text{if $k>m$}.
\end{cases}
$$
It is easy to see that this function is associative. Now, assume $X=\{a,b,\ldots,z\}$ and let $\alpha\colon X\to\{c,v\}$ be defined by $\alpha(x)=v$, if $x$ is a vowel, and $\alpha(x)=c$, if $x$ is a consonant. Let $G\colon X^*\to X^*$ be the ``indexing'' function whose $k$-ary part $G_k$ is defined by
$$
G_k(\bfx) ~=~
\begin{cases}
\bfx{\,}, & \text{if $k\leqslant m$},\\
x_1\cdots{\,}x_m{\,}\alpha(x_{m+1})\cdots{\,}\alpha(x_k){\,}, & \text{if $k>m$}.
\end{cases}
$$
As mentioned above, $G$ is in $\calA'_{D_m}$ and hence in $\calA_{D_m}$. However, it is not in $\calA_{D_{m+1}}$ and hence not in $\calA'_{D_{m+1}}$. Indeed, we have $G(G(a^{m+1}))\neq G(a^{m+1})$.
\end{example}

\begin{proposition}\label{prop:6vdfg6}
Let $m\geqslant 0$ be an integer. If $F\colon X^*\to X^*$ is in $\calA_{D_m}$ and satisfies \eqref{eq:inter4}, then $F$ is in $\calA'_{D_m}$.
\end{proposition}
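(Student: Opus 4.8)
The plan is to compare the two sets of triples $(\bfx,\bfy,\bfz)$ on which the associativity equation \eqref{eq:assoc} is required to hold in the definitions of $\calA_{D_m}$ and $\calA'_{D_m}$, and to show that condition \eqref{eq:inter4} forces these two trigger sets to coincide. Both classes carry the same side condition $F(D_m)\subseteq X^*$, which is already part of the hypothesis $F\in\calA_{D_m}$, so the whole statement reduces to this set comparison.

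First I would record the elementary identity
$$
F(D_m) ~=~ \bigcup_{k=0}^m\ran(F_k){\,},
$$
which holds because $D_m=\bigcup_{k=0}^m X^k$ and $\ran(F_k)=F(X^k)$. Reading condition \eqref{eq:inter4} through this identity yields the chain of equivalences $F(\bfy)\in F(D_m)\Leftrightarrow|\bfy|\leqslant m\Leftrightarrow\bfy\in D_m$, that is,
$$
\{\bfy\in X^*\mid F(\bfy)\in F(D_m)\} ~=~ D_m{\,}.
$$
Consequently, a triple $(\bfx,\bfy,\bfz)$ satisfies $F(\bfy)\in F(D_m)$ if and only if it satisfies $\bfy\in D_m$. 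Since $F\in\calA_{D_m}$ guarantees that \eqref{eq:assoc} holds for every triple with $\bfy\in D_m$, it therefore holds for every triple with $F(\bfy)\in F(D_m)$, which is exactly what membership in $\calA'_{D_m}$ demands. Together with $F(D_m)\subseteq X^*$ this gives $F\in\calA'_{D_m}$.

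There is essentially no hard step here: the only direction of \eqref{eq:inter4} actually used is the forward implication $F(\bfy)\in F(D_m)\Rightarrow\bfy\in D_m$, while its converse $\bfy\in D_m\Rightarrow F(\bfy)\in F(D_m)$ is automatic for every function (and is precisely the fact underlying the general inclusion $\calA'_{D_m}\subseteq\calA_{D_m}$). The content of the proposition is thus the observation that \eqref{eq:inter4} is exactly the hypothesis needed to detect membership in $D_m$ from the value $F(\bfy)$ alone. I would be careful only to flag what could otherwise go wrong: without \eqref{eq:inter4} the value $F(\bfy)$ of a long string $\bfy$ with $|\bfy|>m$ might accidentally coincide with $F(\bfx)$ for some short $\bfx\in D_m$, placing $F(\bfy)$ in $F(D_m)$ even though $\bfy\notin D_m$; condition \eqref{eq:inter4} is precisely the assumption ruling out such coincidences, and it is what makes the $\calA_{D_m}$-trigger and the $\calA'_{D_m}$-trigger agree.
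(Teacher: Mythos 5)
Your proof is correct and follows essentially the same route as the paper's: the paper's argument is simply that $F(\bfy)\in\bigcup_{k=0}^m\ran(F_k)$ forces $|\bfy|\leqslant m$ by \eqref{eq:inter4}, whence \eqref{eq:assoc} holds because $F\in\calA_{D_m}$. Your version just makes explicit the identification $F(D_m)=\bigcup_{k=0}^m\ran(F_k)$ and the observation that only the forward implication of \eqref{eq:inter4} is needed, which is a fair (if more verbose) elaboration of the same one-line argument.
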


\begin{proof}
%Let $\bfx,\bfy,\bfz \in X^*$ such that $\textstyle{F(\bfy)\in\bigcup_{k=0}^m\ran(F_k)}$. If $|\bfy|\leqslant m$, then Eq.~\eqref{eq:assoc} holds (since $F$ is in $\calA_{D_m}$). If $|\bfy|> m$, then \eqref{eq:inter4} does not hold.
Let $\bfx,\bfy,\bfz \in X^*$ such that $\textstyle{F(\bfy)\in\bigcup_{k=0}^m\ran(F_k)}$. By \eqref{eq:inter4} we must have $|\bfy|\leqslant m$. But then \eqref{eq:assoc} holds (since $F$ is in $\calA_{D_m}$).
\end{proof}

The following example illustrates Proposition~\ref{prop:6vdfg6} and provides a string function in $\calA'_{D_m}$ that does not satisfy \eqref{eq:inter4}.

\begin{example}\label{ex:2.ex2}
Assume $X=\mathbb{L}\cup\N$, where $\mathbb{L}=\{a,b,\ldots,z\}$ and $\N=\{0,1,2,\ldots\}$. For every $\bfx\in X^*$, let $|\bfx|_{\mathbb{L}}$ be the number of letters of $\bfx$ that are in $\mathbb{L}$. Let $m\geqslant 0$ be an integer and consider the functions $F,G\colon X^*\to X^*$ defined by
$$
F(\bfx) ~=~
\begin{cases}
\bfx{\,}, & \text{if $|\bfx|<m$},\\
x_1\cdots{\,}x_{m-1}{\,}|\bfx|{\,}, & \text{if $|\bfx|\geqslant m$},
\end{cases}
$$
and
$$
G(\bfx) ~=~
\begin{cases}
\bfx{\,}, & \text{if $|\bfx|<m$},\\
x_1\cdots{\,}x_m{\,}|\bfx|_{\mathbb{L}}{\,}, & \text{if $|\bfx|\geqslant m$}.
\end{cases}
$$
Clearly, $F$ satisfies \eqref{eq:inter4}. However, $G$ does not since $G(a^m)=G(a^m1)$ for any $a\in\mathbb{L}$.

Let us now prove that both $F$ and $G$ are in $\calA'_{D_m}\setminus\calA_{D_{m+1}}$. By Proposition~\ref{prop:6vdfg6}, to see that $F$ is in $\calA'_{D_m}$ it suffices to show that it is in $\calA_{D_m}$. Let $\bfx,\bfy,\bfz \in X^*$ such that $|\bfy|=m$. We then have
$$
F(\bfx F(\bfy)\bfz) ~=~ F(\bfx{\,} y_1\cdots{\,}y_{m-1}{\,}|\bfy|{\,}\bfz) ~=~ F(\bfx\bfy\bfz),
$$
which shows that $F\in\calA_{D_m}$. Now, $F\notin\calA_{D_{m+1}}$ since $F(F(a^{m+1}))\neq F(a^{m+1})$ for any $a\in\mathbb{L}$. Let us show that $G\in\calA'_{D_m}$. Let $\bfx,\bfy,\bfz \in X^*$ such that $\textstyle{G(\bfy)\in\bigcup_{k=0}^m\ran(G_k)}$. If $G(\bfy)\in\ran(G_k)$ for some $k<m$, then $G(\bfy)=\bfy$ and hence Eq.~\eqref{eq:assoc} clearly holds. If $G(\bfy)\in\ran(G_m)$, then $G(\bfy)=G(y_1\cdots{\,} y_m)$ and hence $|\bfy|_{\mathbb{L}}=|y_1\cdots{\,} y_m|_{\mathbb{L}}$. Moreover, $\bfx\bfy\bfz$ and $\bfx G(\bfy)\bfz$ have the same prefix of length $m$. Therefore Eq.~\eqref{eq:assoc} holds. This shows that $G\in\calA'_{D_m}$. However, we have $G\notin\calA_{D_{m+1}}$ since $G(G(a^{m+1}))\neq G(a^{m+1})$ for any $a\in\mathbb{L}$.
\end{example}

One can easily show that $\calA'_{D_0} \varsubsetneq \calA_{D_0}$. Indeed, take the function $F\colon X^*\to X^*$ defined by $F(\varepsilon)=F(a)=\varepsilon$ for some $a\in X$ and $F(\bfx)=\bfx$ if $\bfx\notin\{\varepsilon,a\}$. The following example shows that $\calA'_{D_m} \varsubsetneq \calA_{D_m}$ for every integer $m\geqslant 1$.

\begin{example}\label{ex:e675}
Let $m\geqslant 1$ be an integer and consider the function $F\colon X^*\to X^*$ defined by
$$
F(\bfx) ~=~
\begin{cases}
\bfx{\,}, & \text{if $|\bfx|\leqslant m$ or $|\bfx|=m+2$},\\
x_1\cdots{\,}x_{m}{\,}, & \text{if $|\bfx|=m+1$ or $|\bfx|>m+2$}.
\end{cases}
$$
Clearly $F$ is in $\calA_{D_m}$. However, it is not in $\calA'_{D_m}$ since, setting $\bfy=a^{m+1}$ and $z=a$ for some $a\in X$, we have $F(\bfy)=a^m=F(a^m)\in\ran(F_m)$ but $F(\bfy z)=a^{m+2}\neq a^m=F(F(\bfy)z)$.
\end{example}
%Thus, we have shown that each of the inclusions in \eqref{eq:75f1} is strict. The strictness of the inclusions in \eqref{eq:75f2} will be established in the next section.
%Let us come back to the sets $D_m=\{\bfx\in X^*\mid ~|\bfx|\leqslant m\}$.

Just as we have $\calA\subseteq\calP$, we also have $\calA_{D_m}\subseteq\calP_{D_m}$ and $\calA'_{D_m}\subseteq\calP'_{D_m}$ for every integer $m\geqslant 0$. This observation immediately follows from Propositions~\ref{prop:wer61} and \ref{prop:wer6} below. Let us now show that these inclusions are strict. For $m=0$, take $F\colon X^*\to X^*$ such that $F(\varepsilon)=a$ for some $a\in X$ and $F(\bfx)=\bfx$ for every $\bfx\neq\varepsilon$. Then $F$ is in $\calP_{D_0}\setminus \calA_{D_0}$. For $m\geqslant 1$, let $\sigma$ be a nontrivial permutation on $X$. The function $F\colon X^*\to X^*$ defined by $F(\varepsilon)=\varepsilon$ and $F(x_1\cdots{\,}x_n)=\sigma(x_1)\cdots{\,}\sigma(x_n)$ for every integer $n\geqslant 1$ is in $\calP\setminus\calA_{D_1}$. Consider for instance in Eq.~\eqref{eq:assoc} the strings $\bfx=\bfz=\varepsilon$ and $\bfy =\sigma^{-1}(a)$ for some $a\in X$ such that $\sigma(a)\neq a$.

Let us now show that the sets $\calP_{D_m}\setminus\calP_{D_{m+1}}$ and $\calP'_{D_m}\setminus\calP'_{D_{m+1}}$ are nonempty. Consider first the case $m=0$. Take $X=\R$ and the function $F\colon\R^*\to\R^*$ defined by $F(\varepsilon)=\varepsilon$ and $F(x_1\cdots{\,}x_n)=\frac{1}{n}\sum_{i=1}^nx_i$ for every integer $n\geqslant 1$. Then $F$ is in $\calP'_{D_0}$ (and hence in $\calP_{D_0}$) but not in $\calP_{D_1}$ (and hence not in $\calP'_{D_1}$). Indeed, we have $F(0)=F(00)$ but $F(01)\neq F(001)$. For the case $m\geqslant 1$, take $F\colon X^*\to X^*$ defined as
$$
F(\bfx) ~=~
\begin{cases}
\bfx{\,}, & \text{if $|\bfx|\leqslant m$},\\
x_1\cdots{\,}x_{m}x_{k-1}{\,}, & \text{if $|\bfx|=k>m$}.
\end{cases}
$$
Then, $F$ is in $\calP'_{D_m}$ but not in $\calP_{D_{m+1}}$. Indeed, if $z\neq z'$, we have
$$
F(y_1\cdots{\,}y_m z) ~=~ F(y_1\cdots y_m z')
$$
but
$$
F(y_1\cdots{\,}y_m za) ~\neq ~ F(y_1\cdots y_m z'a).
$$

%---------------------------------------------------------------------------------------------- Section
\section{Characterization results}

In this section we aim at localizing each of the parameterized classes $\calA_D$ introduced above within its corresponding superclass $\calP_D$. This goal is achieved in two ways: on the one hand, under the assumption that $F(D)\subseteq D$ we show that functions in $\calA_D$ are exactly those in $\calP_D$ that verify the condition $F|_D=F\circ F|_D$ and, on the other hand, we also show that functions in $\calP_D$ admit factorizations in terms of functions in $\calA_D$. Similar results are then established for $\calA'_D$ and $\calP'_D$.

As observed we have $\calA\subseteq\calP$ and this inclusion is actually strict. In fact, we have the following result.

\begin{proposition}[{\cite{MarTeh14,MarTeh14b}}]\label{prop:wer60}
A function $F\colon X^*\to X^*$ is in $\calA$ if and only if it is in $\calP$ and satisfies $F=F\circ F$.
\end{proposition}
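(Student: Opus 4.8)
The plan is to prove both implications directly from the definitions, exploiting the fact that the empty string $\varepsilon$ is the neutral element for concatenation, so that specializing $\bfx=\bfz=\varepsilon$ in \eqref{eq:assoc} is available as a free move.

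For the forward implication, suppose $F$ is in $\calA$. First I would derive the idempotency condition $F=F\circ F$ by specializing the associativity equation \eqref{eq:assoc}: setting $\bfx=\bfz=\varepsilon$ yields $F(\bfy)=F(F(\bfy))$ for every $\bfy\in X^*$, which is exactly $F=F\circ F$. Next I would establish preassociativity. Assuming $F(\bfy)=F(\bfy')$ for some $\bfy,\bfy'\in X^*$, I apply \eqref{eq:assoc} twice together with this hypothesis to obtain, for arbitrary $\bfx,\bfz\in X^*$,
$$
F(\bfx\bfy\bfz) ~=~ F(\bfx F(\bfy)\bfz) ~=~ F(\bfx F(\bfy')\bfz) ~=~ F(\bfx\bfy'\bfz),
$$
which is precisely condition \eqref{eq:preassoc}. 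Hence $F$ lies in $\calP$.

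For the converse, suppose $F$ is in $\calP$ and satisfies $F=F\circ F$. The key observation is that the idempotency condition, read pointwise, says $F(\bfy)=F(F(\bfy))$ for every $\bfy\in X^*$; that is, $\bfy$ and its image $F(\bfy)$ have the same value under $F$. Applying preassociativity \eqref{eq:preassoc} to the pair $\bfy$ and $\bfy'=F(\bfy)$ then gives $F(\bfx\bfy\bfz)=F(\bfx F(\bfy)\bfz)$ for all $\bfx,\bfz\in X^*$, which is the associativity equation \eqref{eq:assoc}. Therefore $F$ is in $\calA$.

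There is no genuine obstacle here: the entire argument rests on the single structural remark that $F=F\circ F$ is equivalent to the family of equalities $F(\bfy)=F(F(\bfy))$. This equivalence is what converts the preassociativity implication directly into associativity and is, conversely, exactly the specialization of associativity at $\bfx=\bfz=\varepsilon$. I expect the only point deserving a word of care is to state explicitly that this pointwise reading of $F=F\circ F$ supplies precisely the antecedent $F(\bfy)=F(\bfy')$ needed to invoke \eqref{eq:preassoc}, so that the converse is not circular.
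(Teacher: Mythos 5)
Your proof is correct and follows exactly the argument the paper uses: the proposition itself is only cited from earlier work, but the paper's proofs of its generalizations (Propositions~\ref{prop:wer61} and \ref{prop:wer6}) use precisely your two moves — the chain $F(\bfx\bfy\bfz)=F(\bfx F(\bfy)\bfz)=F(\bfx F(\bfy')\bfz)=F(\bfx\bfy'\bfz)$ for the forward direction and the pointwise reading $F(\bfy)=F(F(\bfy))$ fed into \eqref{eq:preassoc} for the converse. No gaps.
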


The following two propositions provide generalizations of Proposition~\ref{prop:wer60} to the classes $\calA_D$, $\calA'_D$, $\calP_D$, and $\calP'_D$.

\begin{proposition}\label{prop:wer61}
Let $D$ be an nonempty subset of $X^*$ and consider a function $F\colon X^*\to\ran(F)$. If $F$ is in $\calA_D$, then it is in $\calP_D$ and satisfies $F|_D=F\circ F|_D$. The converse also holds whenever $F(D)\subseteq D$.
\end{proposition}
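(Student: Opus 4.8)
The plan is to prove the two directions separately. The forward direction follows directly by specializing and then iterating the associativity equation, while the converse rests entirely on the extra hypothesis $F(D)\subseteq D$, which is what makes preassociativity applicable.

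For the forward implication, assume $F\in\calA_D$. I would first establish $F|_D=F\circ F|_D$ by specializing \eqref{eq:assoc} to $\bfx=\bfz=\varepsilon$: for any $\bfy\in D$, and using that $F(D)\subseteq X^*$ so that $\varepsilon F(\bfy)\varepsilon$ is a genuine string, this reads $F(\bfy)=F(F(\bfy))$, which is exactly the stated identity. Next, to show $F\in\calP_D$, I would take $\bfy,\bfy'\in D$ with $F(\bfy)=F(\bfy')$ and, for arbitrary $\bfx,\bfz\in X^*$, apply \eqref{eq:assoc} twice—once with $\bfy\in D$ and once with $\bfy'\in D$—to obtain $F(\bfx\bfy\bfz)=F(\bfx F(\bfy)\bfz)=F(\bfx F(\bfy')\bfz)=F(\bfx\bfy'\bfz)$, which is precisely the implication \eqref{eq:preassoc} required for membership in $\calP_D$.

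For the converse, assume $F\in\calP_D$, $F|_D=F\circ F|_D$, and $F(D)\subseteq D$. Membership condition $F(D)\subseteq X^*$ is immediate since $F(D)\subseteq D\subseteq X^*$. To verify \eqref{eq:assoc} for an arbitrary $\bfy\in D$ and all $\bfx,\bfz\in X^*$, I would set $\bfy'=F(\bfy)$. The hypothesis $F(D)\subseteq D$ guarantees $\bfy'\in D$, so that $\bfy$ and $\bfy'$ are both admissible arguments for preassociativity; moreover the idempotency condition applied to $\bfy\in D$ gives $F(\bfy')=F(F(\bfy))=F(\bfy)$. Thus $F(\bfy)=F(\bfy')$ with $\bfy,\bfy'\in D$, and preassociativity yields $F(\bfx\bfy\bfz)=F(\bfx\bfy'\bfz)=F(\bfx F(\bfy)\bfz)$, establishing \eqref{eq:assoc} and hence $F\in\calA_D$.

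The only real subtlety, and the step I expect to be the main obstacle, is this last point in the converse: without $F(D)\subseteq D$ one cannot guarantee that $\bfy'=F(\bfy)$ lies in $D$, and so the preassociativity of $F$—which is only assumed to hold for pairs of strings drawn from $D$—could not be invoked. This pinpoints exactly why the converse needs the additional assumption, whereas the forward direction requires no restriction on $F(D)$ beyond $F(D)\subseteq X^*$.
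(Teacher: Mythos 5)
Your proof is correct and follows essentially the same route as the paper's: the forward direction specializes \eqref{eq:assoc} to $\bfx=\bfz=\varepsilon$ and then applies it twice to get preassociativity on $D$, and the converse sets $\bfy'=F(\bfy)$ and invokes preassociativity, with $F(D)\subseteq D$ guaranteeing $\bfy'\in D$. Your explicit remark on exactly where the hypothesis $F(D)\subseteq D$ is needed is a correct and useful clarification of a point the paper leaves implicit.
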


\begin{proof}
Suppose $F\in\calA_D$. Clearly, we have $F|_D=F\circ F|_D$. Now, let $\bfy,\bfy'\in D$ be such that $F(\bfy)=F(\bfy')$. Then, we have
$$
F(\bfx\bfy\bfz) ~=~ F(\bfx F(\bfy)\bfz) ~=~ F(\bfx F(\bfy')\bfz) ~=~ F(\bfx\bfy'\bfz),
$$
which shows that $F\in\calP_D$. For the converse statement, let $\bfx,\bfz\in X^*$ and $\bfy\in D$. We then have $F(\bfy)=F(F(\bfy))$ and hence, since $F$ is in $\calP_D$, we also have $F(\bfx\bfy\bfz)=F(\bfx F(\bfy)\bfz)$, which shows that $F\in\calA_D$.
\end{proof}

\begin{proposition}\label{prop:wer6}
Let $D$ be a nonempty subset of $X^*$ and let $F\colon X^*\to\ran(F)$ be such that $F(D)\subseteq X^*$. Then $F$ is in $\calA'_D$ if and only if it is in $\calP'_D$ and satisfies $F|_D=F\circ F|_D$.
\end{proposition}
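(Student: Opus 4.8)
The plan is to prove both implications directly from the definitions, closely following the pattern of Proposition~\ref{prop:wer61} but exploiting the extra flexibility of $\calA'_D$ and $\calP'_D$, in which the constraining string is required on only one side. Throughout, the hypothesis $F(D)\subseteq X^*$ is fixed, so for the membership $F\in\calA'_D$ only \eqref{eq:assoc} (under its $\calA'_D$-restriction) must be verified.

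For the forward direction, I would assume $F\in\calA'_D$. The fixed-point identity $F|_D=F\circ F|_D$ is immediate: for $\bfy\in D$ we have $F(\bfy)\in F(D)$, so taking $\bfx=\bfz=\varepsilon$ in \eqref{eq:assoc} gives $F(\bfy)=F(F(\bfy))$. To obtain $F\in\calP'_D$, I would fix $\bfx,\bfz\in X^*$, $\bfy\in D$, and $\bfy'\in X^*$ with $F(\bfy)=F(\bfy')$. Since $\bfy\in D$ we have $F(\bfy)\in F(D)$, and since $F(\bfy')=F(\bfy)\in F(D)$ as well, \eqref{eq:assoc} applies to both $\bfy$ and $\bfy'$, yielding $F(\bfx\bfy\bfz)=F(\bfx F(\bfy)\bfz)=F(\bfx F(\bfy')\bfz)=F(\bfx\bfy'\bfz)$.

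For the converse, I would assume $F\in\calP'_D$ and $F|_D=F\circ F|_D$, and fix $\bfx,\bfy,\bfz$ with $F(\bfy)\in F(D)$, choosing $\bfu\in D$ such that $F(\bfu)=F(\bfy)$. The main obstacle is that $\bfy$ itself need not belong to $D$, so preassociativity cannot be applied at $\bfy$ directly; this is precisely where the auxiliary string $\bfu\in D$ enters. I would apply the $\calP'_D$ condition twice, always with $\bfu$ as the distinguished $D$-string: first, from $F(\bfu)=F(\bfy)$ conclude $F(\bfx\bfu\bfz)=F(\bfx\bfy\bfz)$; second, from the fixed-point identity $F(\bfu)=F(F(\bfu))$ (valid since $\bfu\in D$) conclude $F(\bfx\bfu\bfz)=F(\bfx F(\bfu)\bfz)$. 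Because $F(\bfu)=F(\bfy)$, the latter right-hand side equals $F(\bfx F(\bfy)\bfz)$, and chaining these equalities gives $F(\bfx\bfy\bfz)=F(\bfx F(\bfy)\bfz)$, as required.

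I would emphasize that, in contrast with Proposition~\ref{prop:wer61}, no assumption such as $F(D)\subseteq D$ is needed here: the one-sided membership condition in $\calP'_D$ allows the $D$-string to occupy either slot of the preassociativity implication, so the successive replacements of $\bfy$ by $\bfu$ and then of $\bfu$ by $F(\bfu)$ can both be performed even when $\bfy\notin D$.
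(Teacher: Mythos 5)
Your proposal is correct and follows essentially the same route as the paper's proof: the necessity direction applies the $\calA'_D$ condition to both $\bfy$ and $\bfy'$ (the latter being legitimate because $F(\bfy')=F(\bfy)\in F(D)$), and the sufficiency direction applies the $\calP'_D$ condition twice with a $D$-string as the distinguished slot, first to replace $\bfy$ by a witness in $D$ and then to replace that witness by its image under $F$. The only difference is notational (your $\bfu$ plays the role of the paper's $\bfy$).
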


\begin{proof}
(Necessity) Let $\bfy\in D$ and $\bfy'\in X^*$ be such that $F(\bfy)=F(\bfy')$. Since $F\in\calA'_D$, for every $\bfx,\bfz\in X^*$ we have
$$
F(\bfx\bfy\bfz) ~=~ F(\bfx F(\bfy)\bfz) ~=~ F(\bfx F(\bfy')\bfz) ~=~ F(\bfx\bfy'\bfz),
$$
which shows that $F\in\calP'_D$. Also, we clearly have $F|_D=F\circ F|_D$.

(Sufficiency) Let $\bfx,\bfz\in X^*$ and $\bfy\in D$. As $F(\bfy)=F(F(\bfy))$ and $F\in\calP'_D$, we also have $F(\bfx\bfy\bfz)=F(\bfx F(\bfy)\bfz)$.

Now, let $\bfy'\in X^*$ such that $F(\bfy')=F(\bfy)$. Since $F\in\calP'_D$ we have
$$
F(\bfx\bfy'\bfz)=F(\bfx\bfy\bfz)=F(\bfx F(\bfy)\bfz)=F(\bfx F(\bfy')\bfz),
$$
which shows that $F\in\calA'_D$.
\end{proof}

Let us recall the following factorization established in \cite{LehMarTeh14}.

\begin{theorem}[{\cite{LehMarTeh14}}]\label{thm:ds8610}
Assume AC and let $F\colon X^*\to Y$ be a function. The following assertions are equivalent.
\begin{enumerate}
\item[(i)] $F\in\calP$.

\item[(ii)] There exist $H\in\calA$ and a one-to-one function $f\colon\ran(H)\to Y$ such that $F=f\circ H$.
\end{enumerate}
For any $g\in Q(F)$, we can choose $H=g\circ F$ and $f=F|_{\ran(H)}$ in assertion (ii).
\end{theorem}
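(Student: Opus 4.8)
The plan is to prove the two implications separately and then observe that the constructive recipe in the last sentence drops out of the argument for (i)$\Rightarrow$(ii). The easy direction is (ii)$\Rightarrow$(i). Assuming $F=f\circ H$ with $H\in\calA$ and $f$ one-to-one, I would verify \eqref{eq:preassoc} directly: if $F(\bfy)=F(\bfy')$, then $f(H(\bfy))=f(H(\bfy'))$, and injectivity of $f$ forces $H(\bfy)=H(\bfy')$. Applying associativity of $H$ twice then gives
$$
H(\bfx\bfy\bfz) ~=~ H(\bfx H(\bfy)\bfz) ~=~ H(\bfx H(\bfy')\bfz) ~=~ H(\bfx\bfy'\bfz),
$$
and composing with $f$ yields $F(\bfx\bfy\bfz)=F(\bfx\bfy'\bfz)$. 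Hence $F\in\calP$.

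For (i)$\Rightarrow$(ii), I would fix an arbitrary $g\in Q(F)$, which exists under AC, and set $H=g\circ F$ and $f=F|_{\ran(H)}$. The first bookkeeping step is to check that these objects are well defined as a string function and a map into $Y$: since $F(\bfx)\in\ran(F)$ for every $\bfx$, we have $H(\bfx)=g|_{\ran(F)}(F(\bfx))\in\ran(g)\subseteq\dom(F)=X^*$, so $H\colon X^*\to X^*$ and $f$ is a genuine restriction of $F$. The real engine of the whole argument is the single identity obtained from the quasi-inverse relation $F\circ g|_{\ran(F)}=\id_{\ran(F)}$, namely
$$
F(H(\bfx)) ~=~ F(g(F(\bfx))) ~=~ F(\bfx)\qquad(\bfx\in X^*),
$$
that is, $F\circ H=F$.

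From this identity the three required properties follow quickly. \emph{Factorization:} $f\circ H=F|_{\ran(H)}\circ H=F\circ H=F$. \emph{Injectivity of $f$:} any two points of $\ran(H)$ have the form $g(y_1),g(y_2)$ with $y_1,y_2\in\ran(F)$, and if $F(g(y_1))=F(g(y_2))$ then the quasi-inverse identity gives $y_1=y_2$, whence $g(y_1)=g(y_2)$. \emph{Associativity of $H$:} since $F\circ H=F$ gives $F(H(\bfy))=F(\bfy)$, preassociativity of $F$ applied to the pair $\bfy$, $H(\bfy)$ yields $F(\bfx H(\bfy)\bfz)=F(\bfx\bfy\bfz)$, and applying $g$ to both sides produces $H(\bfx H(\bfy)\bfz)=H(\bfx\bfy\bfz)$, i.e.\ \eqref{eq:assoc}. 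Because $g$ was an arbitrary element of $Q(F)$, the final assertion of the theorem is established simultaneously.

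The only delicate point, and the one I would be most careful about, is not a deep obstruction but the correct handling of the quasi-inverse definition: one must consistently use that $g$ satisfies $F\circ g|_{\ran(F)}=\id_{\ran(F)}$ \emph{only on $\ran(F)$}, and invoke $F(\bfx)\in\ran(F)$ to legitimize every application of this identity, as well as $\ran(g)\subseteq\dom(F)$ to keep all compositions well defined. Once the identity $F\circ H=F$ is in hand, the associativity of $H$ — which is where one might expect the difficulty — reduces to a one-line use of preassociativity followed by an application of $g$.
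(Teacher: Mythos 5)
Your proof is correct and matches the paper's approach: the paper recalls this theorem from \cite{LehMarTeh14} and re-derives it as the $D=X^*$ case of Proposition~\ref{prop:1}, whose proof uses exactly your construction $H=g\circ F$, $f=F|_{\ran(H)}$ together with the key quasi-inverse identity $F\circ g\circ F=F$. The only cosmetic difference is that the paper certifies $H\in\calA$ by checking $H=H\circ H$ and invoking Proposition~\ref{prop:wer60}, whereas you verify \eqref{eq:assoc} for $H$ directly from the preassociativity of $F$; both amount to the same computation.
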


We will now generalize Theorem~\ref{thm:ds8610} to the classes $\calP_D$ and $\calP'_D$ (see Theorems~\ref{thm:ds861} and \ref{thm:ds86}). For this purpose, we first present a more general factorization result (see Proposition~\ref{prop:1}).

Let $\mathcal{C}$ be any class of functions defined on a set $\Omega$ and satisfying the following property: if $F\in\mathcal{C}$, then $g\circ F\in\mathcal{C}$ for every one-to-one map $g$ defined on $\ran(F)$. For any nonempty set $D\subseteq\Omega$, define also the following subclasses:
\begin{eqnarray*}
\mathcal{C}'_D &=& \{F\in\mathcal{C}\mid F(D)\subseteq D\}{\,},\\
\mathcal{C}''_D &=& \{F\in\mathcal{C}\mid F(D)\subseteq D~\text{and}~F\circ F|_D=F|_D\}{\,}.
\end{eqnarray*}

\begin{example}
If $\mathcal{C}$ is the class $\calP$ of preassociative functions on $X^*$, then $\mathcal{C}''_{X^*}$ is the class $\calA$ of associative string functions on $X^*$ by Proposition~\ref{prop:wer60}.
\end{example}

We also observe that, assuming AC, for any subset $D$ of $\dom(f)$ the set
$$
Q_D(f) ~=~ \{g\in Q(f)\mid (g\circ f)(D)\subseteq D\}
$$
is nonempty. Indeed, if $x\in D$, we can set $g(f(x))=x$.

\begin{proposition}\label{prop:1}
Assume AC, let $D\subseteq\Omega$, and let $F$ be a function defined on $\Omega$. The following assertions are equivalent.
\begin{enumerate}
\item[(i)] $F\in\mathcal{C}$.

\item[(ii)] There exist $H\in\mathcal{C}'_D$ and a one-to-one function $f$ defined on $\ran(H)$ such that $F=f\circ H$.

\item[(iii)] There exist $H\in\mathcal{C}''_D$ and a one-to-one function $f$ defined on $\ran(H)$ such that $F=f\circ H$.
\end{enumerate}
For any $g\in Q_D(F)$, we can choose $H=g\circ F$ and $f=F|_{\ran(H)}$ in assertions (ii) and (iii).
\end{proposition}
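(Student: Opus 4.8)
The plan is to establish the cycle of implications $(iii)\Rightarrow(ii)\Rightarrow(i)\Rightarrow(iii)$, reserving the actual construction for the last one. The implication $(iii)\Rightarrow(ii)$ needs nothing beyond the observation that $\mathcal{C}''_D\subseteq\mathcal{C}'_D$, so any pair $(H,f)$ witnessing (iii) already witnesses (ii). For $(ii)\Rightarrow(i)$, I would start from a factorization $F=f\circ H$ with $H\in\mathcal{C}'_D\subseteq\mathcal{C}$ and $f$ one-to-one on $\ran(H)$, and then invoke the defining closure property of $\mathcal{C}$: since $H\in\mathcal{C}$ and $f$ is a one-to-one map defined on $\ran(H)$, the composite $f\circ H=F$ again lies in $\mathcal{C}$.

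The core of the argument is $(i)\Rightarrow(iii)$, which I would prove constructively following the recipe in the final sentence of the statement. Assuming $F\in\mathcal{C}$, fix some $g\in Q_D(F)$; such a $g$ exists under AC by the remark preceding the proposition. Set $H=g\circ F$ and $f=F|_{\ran(H)}$. The single fact from which everything else flows is the identity $F\circ H=F$: for every $x\in\Omega$ we have $F(x)\in\ran(F)$, so the quasi-inverse relation $F\circ g|_{\ran(F)}=\id_{\ran(F)}$ gives $F(H(x))=F(g(F(x)))=F(x)$. From this, $F=f\circ H$ is immediate since $f(H(x))=F(H(x))=F(x)$, and the injectivity of $f$ follows too: if $u,v\in\ran(H)$ satisfy $F(u)=F(v)$, write $u=g(F(a))$ and $v=g(F(b))$, apply $F\circ H=F$ to obtain $F(a)=F(b)$, and conclude $u=g(F(a))=g(F(b))=v$.

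It then remains to verify the three conditions placing $H$ in $\mathcal{C}''_D$. To see $H\in\mathcal{C}$, I would rewrite $H=g\circ F=(g|_{\ran(F)})\circ F$, which is legitimate because $F$ takes values in $\ran(F)$, and note that $g|_{\ran(F)}$ is one-to-one by the definition of quasi-inverse; the closure property of $\mathcal{C}$ then yields $H\in\mathcal{C}$. The inclusion $H(D)\subseteq D$ is precisely the condition $(g\circ F)(D)\subseteq D$ built into the choice $g\in Q_D(F)$. Finally, the idempotency $H\circ H|_D=H|_D$ comes for free from $F\circ H=F$: for any $x$ (whether or not in $D$) we compute $H(H(x))=g(F(H(x)))=g(F(x))=H(x)$.

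I expect the main obstacle to be bookkeeping rather than anything conceptual. The two delicate points are, first, to draw the quasi-inverse from $Q_D(F)$ rather than merely from $Q(F)$, since this is exactly what forces $H(D)\subseteq D$; and second, to apply the closure axiom of $\mathcal{C}$ to the injective restriction $g|_{\ran(F)}$ rather than to $g$ itself. Once the identity $F\circ H=F$ is in hand, the factorization, the injectivity of $f$, and the idempotency of $H$ all follow mechanically, with the pleasant bonus that $H$ is in fact idempotent on all of $\Omega$.
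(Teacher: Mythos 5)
Your proposal is correct and follows essentially the same route as the paper: the same cycle $(iii)\Rightarrow(ii)\Rightarrow(i)\Rightarrow(iii)$, with the key construction $H=g\circ F$ and $f=F|_{\ran(H)}$ for $g\in Q_D(F)$. The only cosmetic difference is that you verify the injectivity of $f$ by a direct elementwise computation from $F\circ H=F$, whereas the paper deduces it from the identity $\ran(H)=\ran(g)$ and the symmetry of the quasi-inverse relation; both are valid.
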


\begin{proof}
(iii) $\Rightarrow$ (ii) Trivial.

(ii) $\Rightarrow$ (i) Since $H\in\mathcal{C}'_D\subseteq\mathcal{C}$, we have $F\in\mathcal{C}$.

(i) $\Rightarrow$ (iii) Let $g\in Q_D(F)$ and set $H=g\circ F$. Since $g|_{\ran(F)}$ is one-to-one, we have $H\in\mathcal{C}$. Also, we have $H(D)=(g\circ F)(D)\subseteq D$ and $H\circ H|_D=g\circ F\circ g\circ F|_D=g\circ F|_D=H|_D$. It follows that $H\in\mathcal{C}''_D$.

Now, let $f=F|_{\ran(H)}$. Since $\ran(H)=\ran(g\circ F)=\ran(g)$, the map $f$ is one-to-one. Finally, we have $f\circ H=F\circ H=F\circ g\circ F = F$.
\end{proof}

Using Proposition~\ref{prop:1} we can now derive the following two generalizations of Theorem~\ref{thm:ds8610}.

\begin{theorem}\label{thm:ds861}
Assume AC, let $D\subseteq X^*$, and let $F\colon X^*\to Y$ be a function. The following assertions are equivalent.
\begin{enumerate}
\item[(i)] $F\in\calP_D$.

\item[(ii)] There exist $H\in\calA_D$ and a one-to-one function $f\colon\ran(H)\to Y$ such that $F=f\circ H$.

\item[(iii)] There exist $H\in\calA_D$ such that $H(D)\subseteq D$ and a one-to-one function $f\colon\ran(H)\to Y$ such that $F=f\circ H$.
\end{enumerate}
For any $g\in Q_D(F)$, we can choose $H=g\circ F$ and $f=F|_{\ran(H)}$ in assertions (ii) and (iii).
\end{theorem}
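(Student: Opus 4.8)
The plan is to deduce this theorem from the general factorization in Proposition~\ref{prop:1} by instantiating $\mathcal{C}=\calP_D$ and then translating the auxiliary class $\mathcal{C}''_D$ into the concrete class $\calA_D$ by means of Proposition~\ref{prop:wer61}. The essential content of the theorem is therefore already packaged in Proposition~\ref{prop:1}; what remains is a bookkeeping step matching the abstract classes to the named ones, plus a short argument to fold assertion (ii) into the equivalence.

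First I would verify that $\mathcal{C}=\calP_D$ meets the standing hypothesis of Proposition~\ref{prop:1}, namely closure under post-composition with one-to-one maps. This is immediate: if $F\in\calP_D$ and $g$ is one-to-one on $\ran(F)$, then for $\bfy,\bfy'\in D$ the injectivity of $g$ gives $(g\circ F)(\bfy)=(g\circ F)(\bfy')$ if and only if $F(\bfy)=F(\bfy')$, so preassociativity of $F$ on $D$ transfers verbatim to $g\circ F$, whence $g\circ F\in\calP_D$. Next I would identify the class $\mathcal{C}''_D$ attached to $\mathcal{C}=\calP_D$. By definition $\mathcal{C}''_D=\{H\in\calP_D\mid H(D)\subseteq D\text{ and }H\circ H|_D=H|_D\}$; since every such $H$ satisfies $H(D)\subseteq D$, the converse direction of Proposition~\ref{prop:wer61} applies and places $H$ in $\calA_D$, while the forward direction shows conversely that every $H\in\calA_D$ with $H(D)\subseteq D$ lies in $\mathcal{C}''_D$. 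Hence $\mathcal{C}''_D=\{H\in\calA_D\mid H(D)\subseteq D\}$, exactly the class occurring in assertion (iii). Applying Proposition~\ref{prop:1} then yields the equivalence of (i) with (iii), together with the final clause that for any $g\in Q_D(F)$ one may take $H=g\circ F$ and $f=F|_{\ran(H)}$.

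It remains to splice assertion (ii) into this chain, since the intermediate class $\mathcal{C}'_D$ of Proposition~\ref{prop:1} equals $\{H\in\calP_D\mid H(D)\subseteq D\}$ rather than $\calA_D$ and so does not match (ii) literally. I would close the cycle through (iii) $\Rightarrow$ (ii) $\Rightarrow$ (i). The first implication is trivial, as dropping the requirement $H(D)\subseteq D$ only weakens (iii) to (ii). For the second, suppose $F=f\circ H$ with $H\in\calA_D$ and $f$ one-to-one on $\ran(H)$; then $H\in\calP_D$ by the forward half of Proposition~\ref{prop:wer61}, and the closure property verified above gives $F=f\circ H\in\calP_D$, which is (i). Combined with the equivalence (i) $\Leftrightarrow$ (iii) already obtained, this establishes that all three assertions are equivalent, and the ``for any $g\in Q_D(F)$'' clause for (ii) follows by weakening the corresponding clause for (iii).

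I do not expect a genuine obstacle here, as the result is essentially a specialization of Proposition~\ref{prop:1}. The only point demanding care is keeping the two directions of Proposition~\ref{prop:wer61} straight: its converse half requires $H(D)\subseteq D$, so this hypothesis must be present precisely where it is invoked. This is what forces the appearance of the stronger assertion (iii) and is exactly why assertion (ii), which omits $H(D)\subseteq D$, cannot be obtained from $\mathcal{C}'_D$ and must instead be handled by the sandwiching argument above; since $H(D)\subseteq D$ is built into both $\mathcal{C}''_D$ and (iii), the identification of $\mathcal{C}''_D$ with $\{H\in\calA_D\mid H(D)\subseteq D\}$ goes through cleanly.
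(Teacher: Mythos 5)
Your proposal is correct and follows essentially the same route as the paper: instantiate Proposition~\ref{prop:1} with $\mathcal{C}=\calP_D$, use Proposition~\ref{prop:wer61} to identify $\mathcal{C}''_D$ with $\{H\in\calA_D\mid H(D)\subseteq D\}$ to get (i) $\Leftrightarrow$ (iii), and close the cycle via (iii) $\Rightarrow$ (ii) $\Rightarrow$ (i). The paper merely labels the last two implications ``straightforward,'' whereas you spell out the closure of $\calP_D$ under post-composition with injections and the careful bookkeeping of which direction of Proposition~\ref{prop:wer61} needs $H(D)\subseteq D$; this is the same argument, just made explicit.
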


\begin{proof}
(i) $\Leftrightarrow$ (iii) Setting $\Omega=X^*$ and $\mathcal{C} = \calP_D$, we have
$$
\mathcal{C}''_{D} ~=~ \{H\colon X^*\to\ran(H)\mid\text{$H\in\calP_D$, $H(D)\subseteq D$, and $H\circ H|_D=H|_D$}\}.
$$
By Proposition~\ref{prop:wer61}, we also have
$$
\mathcal{C}''_{D} ~=~ \{H\colon X^*\to\ran(H)\mid\text{$H\in\calA_D$ and $H(D)\subseteq D$}\}.
$$
We conclude the proof by making use of the equivalence between (i) and (iii) of Proposition~\ref{prop:1}.

(iii) $\Rightarrow$ (ii) $\Rightarrow$ (i) Straightforward. %Let $\bfy,\bfy'\in D$ be such that $F(\bfy)=F(\bfy')$. We then have $H(\bfy)=H(\bfy')$. Thus, for any $\bfx,\bfz\in X^*$, we have $F(\bfx\bfy\bfz)=(f\circ H)(\bfx\bfy\bfz)=(f\circ H)(\bfx{\,}H(\bfy)\bfz)=(f\circ H)(\bfx{\,}H(\bfy')\bfz)=(f\circ H)(\bfx\bfy'\bfz)=F(\bfx\bfy'\bfz)$. This shows that $F\in\calP_D$.
\end{proof}

\begin{theorem}\label{thm:ds86}
Assume AC, let $D\subseteq X^*$, and let $F\colon X^*\to Y$ be a function. The following assertions are equivalent.
\begin{enumerate}
\item[(i)] $F\in\calP'_D$.

\item[(ii)] There exist $H\in\calA'_D$ and a one-to-one function $f\colon\ran(H)\to Y$ such that $F=f\circ H$.

\item[(iii)] There exist $H\in\calA'_D$ satisfying $H(D)\subseteq D$ and a one-to-one function $f\colon\ran(H)\to Y$ such that $F=f\circ H$.

\item[(iv)] There exist $H\in\calA'_D$ satisfying $\ran(H)\subseteq X^*$ and $H\circ H=H$ and a one-to-one function $f\colon\ran(H)\to Y$ such that $F=f\circ H$.
\end{enumerate}
For any $g\in Q(F)$ (resp.\ $g\in Q_D(F)$), we can choose $H=g\circ F$ and $f=F|_{\ran(H)}$ in assertions (ii) and (iv) (resp.\ assertions (ii) and (iii)).
\end{theorem}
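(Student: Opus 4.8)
The plan is to deduce the theorem from the abstract factorization result Proposition~\ref{prop:1}, exactly as Theorem~\ref{thm:ds861} was obtained, taking $\Omega=X^*$ and $\mathcal{C}=\calP'_D$. First I would check that $\calP'_D$ enjoys the hypothesis required of $\mathcal{C}$, namely that $g\circ F\in\calP'_D$ whenever $F\in\calP'_D$ and $g$ is one-to-one on $\ran(F)$: this is immediate because for such $g$ the equality $(g\circ F)(\bfy)=(g\circ F)(\bfy')$ is equivalent to $F(\bfy)=F(\bfy')$, so the implication \eqref{eq:preassoc} transfers verbatim. With this in hand, the subclass $\mathcal{C}''_D$ of Proposition~\ref{prop:1} reads $\{H\in\calP'_D\mid H(D)\subseteq D,\ H\circ H|_D=H|_D\}$, and I would identify it with $\{H\in\calA'_D\mid H(D)\subseteq D\}$ by invoking Proposition~\ref{prop:wer6}: since $H(D)\subseteq D\subseteq X^*$, the condition ``$H\in\calP'_D$ and $H|_D=H\circ H|_D$'' is exactly ``$H\in\calA'_D$''. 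The equivalence (i) $\Leftrightarrow$ (iii), together with the quasi-inverse recipe $H=g\circ F$, $f=F|_{\ran(H)}$ for $g\in Q_D(F)$, then falls out of the (i) $\Leftrightarrow$ (iii) part of Proposition~\ref{prop:1}. The implications (iii) $\Rightarrow$ (ii) and (iv) $\Rightarrow$ (ii) are trivial, and (ii) $\Rightarrow$ (i) follows from the closure property just verified, since any $H$ appearing in (ii) lies in $\calA'_D\subseteq\calP'_D$.

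The only genuinely new assertion relative to Theorem~\ref{thm:ds861} is (iv), and here I would prove (i) $\Rightarrow$ (iv) directly. Assuming $F\in\calP'_D$, pick an arbitrary $g\in Q(F)$ (AC guarantees one) and set $H=g\circ F$ and $f=F|_{\ran(H)}$. Three facts need to be checked. First, $\ran(H)=\ran(g)\subseteq\dom(F)=X^*$, giving $\ran(H)\subseteq X^*$. Second, $H$ is globally idempotent: for every $\bfx$ we have $F(H(\bfx))=F(g(F(\bfx)))=F(\bfx)$ because $F\circ g|_{\ran(F)}=\id_{\ran(F)}$, whence $H(H(\bfx))=g(F(H(\bfx)))=g(F(\bfx))=H(\bfx)$, that is $H\circ H=H$. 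Third, $H\in\calA'_D$: the closure property gives $H=g\circ F\in\calP'_D$, global idempotency gives in particular $H|_D=H\circ H|_D$, and $H(D)\subseteq\ran(H)\subseteq X^*$, so Proposition~\ref{prop:wer6} yields $H\in\calA'_D$. Finally $f$ is one-to-one on $\ran(H)=\ran(g)$ by the standard quasi-inverse argument ($F(g(y))=y$ for $y\in\ran(F)$), and $f\circ H=F\circ H=F\circ g\circ F=F$, again by $F\circ g|_{\ran(F)}=\id_{\ran(F)}$. This proves (iv) with the stated choice of $H$ and $f$ for any $g\in Q(F)$, and closes the chain (i) $\Rightarrow$ (iv) $\Rightarrow$ (ii) $\Rightarrow$ (i) $\Rightarrow$ (iii) $\Rightarrow$ (ii).

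The step I expect to require the most care is precisely this construction of a \emph{globally} idempotent $H$ in $\calA'_D$, and understanding why it is available here but not for $\calA_D$. In Theorem~\ref{thm:ds861} one is forced to restrict to $g\in Q_D(F)$ so as to secure $H(D)\subseteq D$, because the converse half of Proposition~\ref{prop:wer61} needs that hypothesis; this only yields the weaker idempotency $H\circ H|_D=H|_D$. For $\calA'_D$, by contrast, Proposition~\ref{prop:wer6} characterizes membership \emph{without} the constraint $F(D)\subseteq D$, so I am free to use any quasi-inverse $g\in Q(F)$, and such a $g$ makes $H=g\circ F$ idempotent on all of $X^*$. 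Making this distinction explicit is what justifies the extra assertion (iv), and I would flag it as the conceptual heart of the proof.
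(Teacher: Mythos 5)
Your proof is correct and follows essentially the same route as the paper: both reduce the theorem to Proposition~\ref{prop:1} with $\mathcal{C}=\calP'_D$ and use Proposition~\ref{prop:wer6} to identify the relevant subclasses with subclasses of $\calA'_D$. The only cosmetic difference is that for assertion (iv) the paper simply invokes Proposition~\ref{prop:1} again with the subclass $\mathcal{C}''_{X^*}$ (so that $Q_{X^*}(F)=Q(F)$), whereas you unfold that same computation by hand; your closing remark on why $Q(F)$ suffices for (iv) but $Q_D(F)$ is needed for (iii) correctly captures the point.
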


\begin{proof}
(i) $\Leftrightarrow$ (iii) The proof follows from Propositions~\ref{prop:wer6} and \ref{prop:1} with $\mathcal{C} = \calP'_D$ and considering $\mathcal{C}''_D$.

(i) $\Leftrightarrow$ (iv) The proof follows from Propositions~\ref{prop:wer6} and \ref{prop:1} with $\mathcal{C} = \calP'_D$ and considering $\mathcal{C}''_{X^*}$.

(iv) $\Rightarrow$ (ii) $\Rightarrow$ (i) Straightforward.
\end{proof}

We observe that Theorem~\ref{thm:ds8610} is a consequence of Theorem~\ref{thm:ds861} (or, equivalently, of Theorem~\ref{thm:ds86}) whenever $D=X^*$.

%---------------------------------------------------------------------------------------------- Section
\section{Functions having a $D$-determined range}

We now turn our attention to classes of functions with a prescribed range. Recall that, for every integer $m\geqslant 0$, a function $F \colon X^* \to X^*$ is said to be \emph{$m$-bounded} if $|F(\bfx)|\leqslant m$ for every $\bfx\in X^*$. A function $F\colon X^*\to Y$ is said to have an \emph{$m$-determined range} if $\ran(F)=\bigcup_{k=0}^m\ran(F_k)$ (see \cite{LehMarTeh14}). These concepts can be generalized in the following way.

\begin{definition}
Let $D$ be a nonempty subset of $X^*$. We say that a map $F\colon X^*\to Y$
\begin{itemize}
\item is \emph{$D$-valued} if $\ran(F)\subseteq D$.

\item has a \emph{$D$-determined range} if $\ran(F)=F(D)$.
\end{itemize}
\end{definition}

Note that the property of having a $D$-determined range is preserved under left composition with unary maps: if $F\colon X^*\to Y$ has a $D$-determined range, then so has $g\circ F$ for any map $g\colon Y\to Y'$, where $Y'$ is an nonempty set.

\begin{lemma}\label{fact:8dsf}
Let $D$ be a nonempty subset of $X^*$.
\begin{itemize}
\item[(a)] If $F\colon X^*\to X^*$ is $D$-valued and satisfies $F=F\circ F$, then $F$ has a $D$-determined range.
\item[(b)] If $F\colon X^*\to Y$ has a $D$-determined range and satisfies $F(D)\subseteq X^*$ and $F|_D=F\circ F|_D$, then it satisfies $\ran(F)\subseteq X^*$ and $F=F\circ F$.
\item[(c)] If $F\colon X^*\to Y$ satisfies $F=F\circ H$, where $H\colon X^*\to X^*$ is $D$-valued, then it has a $D$-determined range.
\item[(d)] If $F\colon X^*\to X^*$ is in $\calA'_D$, $D$-valued, and satisfies $F=F\circ F$, then it is associative.
\item[(e)] If $F\colon X^*\to Y$ is in $\calA'_D$ and has a $D$-determined range, then it is associative.
\item[(f)] If $F\colon X^*\to Y$ is in $\calP'_D$ and has a $D$-determined range, then it is preassociative.
\end{itemize}
\end{lemma}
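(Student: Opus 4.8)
My plan is to prove all six statements by direct appeals to the definitions, exploiting the single unifying idea that a $D$-determined range, $\ran(F)=F(D)$, allows one to replace an arbitrary argument by one lying in $D$, which is precisely where the relaxed axioms defining $\calA'_D$ and $\calP'_D$ become applicable. I would first dispatch the three ``structural'' items (a), (b), (c), which only involve the interplay of idempotency, $D$-valuedness, and $D$-determined range, and then combine them with the relaxed axioms to obtain the three ``algebraic'' items (d), (e), (f).

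For (a) and (c) the inclusion $F(D)\subseteq\ran(F)$ is immediate, so only $\ran(F)\subseteq F(D)$ needs an argument. In (a) I would take $y=F(\bfx)\in\ran(F)$; since $F$ is $D$-valued we have $F(\bfx)\in D$, and idempotency gives $y=F(\bfx)=F(F(\bfx))\in F(D)$. In (c) the same $y=F(\bfx)$ equals $F(H(\bfx))$ by hypothesis, and $H(\bfx)\in D$ because $H$ is $D$-valued, whence $y\in F(D)$. For (b) I would note that $\ran(F)=F(D)\subseteq X^*$ already settles the range claim, and then for arbitrary $\bfx$ write $F(\bfx)=F(\bfy)$ with $\bfy\in D$ (using $\ran(F)=F(D)$); applying $F$ and invoking $F|_D=F\circ F|_D$ yields $F(F(\bfx))=F(F(\bfy))=F(\bfy)=F(\bfx)$, i.e.\ $F=F\circ F$.

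The heart of the matter is (e), from which (d) follows at once: by (a) a $D$-valued idempotent $F$ has a $D$-determined range, so (d) reduces to (e). For (e) I would first observe that, as $F\in\calA'_D$, the defining condition $F(D)\subseteq X^*$ together with $\ran(F)=F(D)$ forces $\ran(F)\subseteq X^*$, so that associativity is even meaningful; then, since every value $F(\bfy)$ lies in $\ran(F)=F(D)$, the triggering hypothesis $F(\bfy)\in F(D)$ of the relaxed associativity axiom is satisfied for every $\bfy$, and hence \eqref{eq:assoc} holds for all $\bfx,\bfy,\bfz$. For (f) the relaxed preassociativity axiom applies only when the moved string lies in $D$, so a single intermediary is needed: given $F(\bfy)=F(\bfy')$, I would use $\ran(F)=F(D)$ to pick $\bfu\in D$ with $F(\bfu)=F(\bfy)=F(\bfy')$, and then apply the $\calP'_D$ property twice to obtain $F(\bfx\bfy\bfz)=F(\bfx\bfu\bfz)=F(\bfx\bfy'\bfz)$.

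None of these steps presents a genuine obstacle; the only point requiring slight care is (f), where the asymmetry of the $\calP'_D$ axiom (it permits replacing only the string $\bfy\in D$) blocks a direct one-step comparison of $\bfy$ and $\bfy'$ and makes the passage through an element $\bfu\in D$ essential. A secondary point worth flagging is the well-definedness issue in (e): one must explicitly record that $\ran(F)\subseteq X^*$ before speaking of associativity, which is exactly what the combination of $F(D)\subseteq X^*$ and $\ran(F)=F(D)$ supplies.
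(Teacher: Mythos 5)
Your proof is correct and takes essentially the same approach as the paper: the paper declares (a)--(e) straightforward and only writes out (f), and your argument for (f) --- choosing an intermediary $\bfu\in D$ with $F(\bfu)=F(\bfy)=F(\bfy')$ and applying the $\calP'_D$ property twice --- is exactly the paper's. Your verifications of (a)--(e), including the reduction of (d) to (e) via (a) and the observation that $\ran(F)=F(D)\subseteq X^*$ makes associativity meaningful in (e), correctly supply the details the paper omits.
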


\begin{proof}
The proofs of statements (a)--(e) are straightforward. To see that (f) holds, let $\bfy,\bfy'\in X^*$ such that $F(\bfy)=F(\bfy')$. Since $F$ has a $D$-determined range, there exists $\bfu\in D$ such that $F(\bfy)=F(\bfu)=F(\bfy')$. Since $F\in\calP'_D$, we thus have $F(\bfx\bfy\bfz)=F(\bfx\bfu\bfz)=F(\bfx\bfy'\bfz)$, which shows that $F$ is preassociative.
\end{proof}

\begin{theorem}
Assume AC, let $D\subseteq X^*$, and let $F\colon X^*\to Y$ be a function. The following assertions are equivalent.
\begin{enumerate}
\item[(i)] $F$ is preassociative and has a $D$-determined range.

\item[(ii)] $F$ is in $\calP'_D$ and has a $D$-determined range.

\item[(iii)] There exist an associative and $D$-valued function $H\colon X^*\to X^*$ and a one-to-one function $f\colon\ran(H)\to Y$ such that $F=f\circ H$.

\item[(iv)] There exist a function $H\in\calA'_D$ with a $D$-determined range %and satisfying $H\circ H=H$ (this latter condition can be dropped off)
    and a one-to-one function $f\colon\ran(H)\to Y$ such that $F=f\circ H$.

\item[(v)] There exist an associative function $H\colon X^*\to X^*$ with a $D$-determined range and a one-to-one function $f\colon\ran(H)\to Y$ such that $F=f\circ H$.
\end{enumerate}
\end{theorem}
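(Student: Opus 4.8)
The plan is to prove the two easy equivalences (i)$\Leftrightarrow$(ii) and (iv)$\Leftrightarrow$(v) together with the cycle (i)$\Rightarrow$(iii)$\Rightarrow$(v)$\Rightarrow$(i), which collectively render all five assertions equivalent. Almost every link follows quickly from Lemma~\ref{fact:8dsf} and the factorization of Theorem~\ref{thm:ds8610}; the one genuinely substantive step is (i)$\Rightarrow$(iii), where the $D$-determined range hypothesis is exploited to steer the quasi-inverse into $D$.

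First I would dispatch the routine links. For (i)$\Leftrightarrow$(ii), the forward direction is immediate since every preassociative function lies in $\calP'_D$, and the converse is exactly Lemma~\ref{fact:8dsf}(f). For (iv)$\Leftrightarrow$(v), an associative function $H\colon X^*\to X^*$ automatically belongs to $\calA'_D$, giving (v)$\Rightarrow$(iv), while Lemma~\ref{fact:8dsf}(e) turns any $H\in\calA'_D$ with a $D$-determined range into an associative function, giving (iv)$\Rightarrow$(v). The implication (iii)$\Rightarrow$(v) is Lemma~\ref{fact:8dsf}(a): a $D$-valued associative $H$ satisfies $H=H\circ H$ and hence has a $D$-determined range. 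Finally, for (v)$\Rightarrow$(i), if $F=f\circ H$ with $H$ associative (thus preassociative by Proposition~\ref{prop:wer60}) and $f$ one-to-one, then $F$ is preassociative by the implication (ii)$\Rightarrow$(i) of Theorem~\ref{thm:ds8610}; moreover, since $f$ is injective on $\ran(H)$ and $\ran(H)=H(D)$, applying $f$ gives $\ran(F)=f(\ran(H))=f(H(D))=F(D)$, so $F$ has a $D$-determined range.

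The heart of the argument is (i)$\Rightarrow$(iii). Starting from a preassociative $F$, Theorem~\ref{thm:ds8610} yields, for every $g\in Q(F)$, an associative $H=g\circ F$ together with a one-to-one $f=F|_{\ran(H)}$ such that $F=f\circ H$. The idea is to choose the quasi-inverse $g$ so that its entire range lands inside $D$. Because $F$ has a $D$-determined range we have $\ran(F)=F(D)$, so for each $r\in\ran(F)$ we may select (invoking AC) a string $\bfu_r\in D$ with $F(\bfu_r)=r$ and set $g(r)=\bfu_r$ on $\ran(F)$; this defines a genuine quasi-inverse of $F$ with $\ran(g)\subseteq D$. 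Then $\ran(H)=\ran(g\circ F)=\ran(g)\subseteq D$, so $H$ is $D$-valued and associative, which is precisely assertion (iii). The main obstacle is exactly this construction: one must recognize that the $D$-determined range condition is what guarantees the existence of a quasi-inverse whose range sits inside $D$, thereby producing an associative factor that is $D$-valued rather than one merely possessing a $D$-determined range.
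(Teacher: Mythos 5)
Your proof is correct. The logical skeleton you chose --- the equivalences (i)$\Leftrightarrow$(ii) and (iv)$\Leftrightarrow$(v) plus the cycle (i)$\Rightarrow$(iii)$\Rightarrow$(v)$\Rightarrow$(i) --- covers all five assertions, and each link checks out: in particular, your construction for (i)$\Rightarrow$(iii) of a quasi-inverse $g$ with $g(r)=\bfu_r\in D$ and $F(\bfu_r)=r$ genuinely yields $g\in Q(F)$ (the condition $\ran(g|_{\ran(F)})=\ran(g)$ is automatic since you define $g$ only on $\ran(F)$), so Theorem~\ref{thm:ds8610} gives an associative $H=g\circ F$ with $\ran(H)=\ran(g)\subseteq D$. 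The paper organizes the argument differently: it derives (i)$\Leftrightarrow$(iii) by applying the abstract factorization of Proposition~\ref{prop:1} to the class $\mathcal{C}$ of preassociative functions with $D$-determined range, identifying $\mathcal{C}''_D$ as exactly the $D$-valued associative functions via Lemma~\ref{fact:8dsf}(a),(b) and Proposition~\ref{prop:wer60}, and it obtains (ii)$\Leftrightarrow$(iv) from Theorem~\ref{thm:ds86} together with preservation of $D$-determined range under left composition. Under the hood the mechanism is the same (a quasi-inverse steered into $D$, $H=g\circ F$, $f=F|_{\ran(H)}$): the paper's route buys uniformity with the other factorization theorems at the cost of checking the hypotheses of Proposition~\ref{prop:1}, while yours is more self-contained and makes the role of the $D$-determined range hypothesis explicit. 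One small point worth making explicit in your write-up: your (v)$\Rightarrow$(iv) step needs $H(D)\subseteq X^*$ for membership in $\calA'_D$, which holds trivially since $H\colon X^*\to X^*$, and full associativity does place $H$ in $\calA'_{X^*}\subseteq\calA'_D$ by Fact~\ref{fact:12}.
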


\begin{proof}
The equivalence between (i) and (iii) follows from Proposition~\ref{prop:1}, where $\mathcal{C}$ is the class of preassociative functions on $X^*$ that have a $D$-determined range. Indeed, by Lemma~\ref{fact:8dsf}(a), Lemma~\ref{fact:8dsf}(b) and Proposition~\ref{prop:wer60} the class $\mathcal{C}''_D$ then consists of the $D$-valued associative functions on $X^*$. The equivalence between (i) and (ii) follows from Lemma~\ref{fact:8dsf}(f). The equivalence between (ii) and (iv) follows from Theorem~\ref{thm:ds86} and the fact that the property of having a $D$-determined range is preserved under left composition. The implication (iv) $\Rightarrow$ (v) follows from Lemma~\ref{fact:8dsf}(e). The implication (v) $\Rightarrow$ (iv) is trivial.
\end{proof}

As far as the sets $D_m$ ($m\geqslant 0$) are concerned, we also have the following result, which provides an alternative condition for a function in $\calP'_{D_m}$ to have an $m$-determined range.

\begin{proposition}
Let $F\colon X^*\to Y$ be in $\calP'_{D_m}$. The following assertions are equivalent:
\begin{enumerate}
\item[(i)] $\ran(F_{m+1})\subseteq\bigcup_{k=0}^m\ran(F_k)$

\item[(ii)] $\ran(F)=\bigcup_{k=0}^m\ran(F_k)$ ($F$ has an $m$-determined range).
\end{enumerate}
\end{proposition}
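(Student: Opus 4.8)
The implication (ii) $\Rightarrow$ (i) is immediate, since $\ran(F_{m+1})\subseteq\ran(F)=\bigcup_{k=0}^m\ran(F_k)$. The content of the statement lies in the converse, and my plan is to prove (i) $\Rightarrow$ (ii) by establishing the stronger-looking claim that $\ran(F_k)\subseteq\bigcup_{i=0}^m\ran(F_i)$ holds for \emph{every} integer $k\geqslant 0$; taking the union over all $k$ then yields $\ran(F)=\bigcup_{k=0}^m\ran(F_k)$, which is (ii). I would argue by induction on $k$.

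The base cases are handled directly: for $k\leqslant m$ the inclusion is trivial, as $\ran(F_k)$ is one of the terms of the union, and for $k=m+1$ it is precisely the hypothesis (i). For the inductive step, fix $k\geqslant m+2$ and take an arbitrary $\bfx\in X^k$. Write $\bfx=\bfx'z$ with $\bfx'\in X^{k-1}$ and $z\in X$. Since $k-1\geqslant m+1$, the inductive hypothesis gives $F(\bfx')\in\bigcup_{i=0}^m\ran(F_i)$, so there is a string $\bfu$ with $|\bfu|\leqslant m$ (hence $\bfu\in D_m$) satisfying $F(\bfu)=F(\bfx')$.

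Here is the crucial move. To invoke preassociativity on $D_m$ I would orient the substitution so that the \emph{short} string plays the role of the element of $D_m$: taking $\bfy=\bfu\in D_m$ and $\bfy'=\bfx'$ in \eqref{eq:preassoc}, together with the empty prefix and the one-letter suffix $z$, the equality $F(\bfu)=F(\bfx')$ yields $F(\bfx)=F(\bfx'z)=F(\bfu z)$. Now $|\bfu z|\leqslant m+1<k$, so the inclusion $\ran(F_{|\bfu z|})\subseteq\bigcup_{i=0}^m\ran(F_i)$ has already been established (either trivially when $|\bfu z|\leqslant m$, or via the base case $k=m+1$, i.e.\ hypothesis (i), when $|\bfu z|=m+1$). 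Hence $F(\bfx)=F(\bfu z)\in\bigcup_{i=0}^m\ran(F_i)$, completing the induction.

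The only genuinely delicate point is the orientation of the preassociative substitution: the definition of $\calP'_{D_m}$ only guarantees the replacement $F(\bfx\bfy\bfz)=F(\bfx\bfy'\bfz)$ when the matched string $\bfy$ lies in $D_m$, so one must substitute the \emph{short} representative $\bfu$ into the context rather than attempting to shorten $\bfx'$ directly. The rest is a length-bookkeeping argument ensuring that appending a single letter keeps the reduced string within length $m+1$, exactly the boundary length at which hypothesis (i) takes over.
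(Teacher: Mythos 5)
Your proof is correct and uses essentially the same mechanism as the paper's: repeatedly replace a substring by a length-$\leqslant m$ representative via the $\calP'_{D_m}$ property (with the short string in the role of $\bfy\in D_m$) until the whole string has length at most $m+1$, where hypothesis (i) applies. The only difference is organizational — you induct on length by peeling off the last letter, whereas the paper iterates by replacing the length-$(m+1)$ prefix of the full string — and your explicit remark about the orientation of the substitution makes precise a point the paper leaves implicit.
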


\begin{proof}
(ii) $\Rightarrow$ (i) Trivial (preassociativity is not needed).

(i) $\Rightarrow$ (ii) We only need to show that $\ran(F)\subseteq\bigcup_{k=0}^m\ran(F_k)$. Let $F(\bfx)\in\ran(F)$. We have to consider the following mutually exclusive cases:
\begin{enumerate}
  \item[(a)] If $|\bfx|\leqslant m$, then we are done since $F(\bfx)\in\bigcup_{k=0}^m\ran(F_k)$.
  \item[(b)] If $|\bfx|=k\geqslant m+1$, then by (i) there exists $\bfu\in X^*$, with $|\bfu|\leqslant m$, such that $F(x_1\cdots x_{m+1})=F(\bfu)$. Since $f\in \calP'_{D_m}$, we have $F(\bfx)=F(\bfy)$, where $\bfy=\bfu{\,}{x_{m+2}\cdots x_k}$. Since $|\bfy|<|\bfx|$, we can iterate the process and then we are done after at most $|\bfx|-m$ iterations.\qedhere
\end{enumerate}
\end{proof}

%---------------------------------------------------------------------------------------------- Section
\section{Conclusion and further research}

The properties of associativity and preassociativity for functions defined over strings are given in terms of a functional equation and a logical implication, respectively. In this paper we relaxed these properties by imposing restrictions on the variables of these defining conditions.

In particular, in Section 2 we showed that certain restrictions on the length of the string variables induce strict hierarchies of nested classes whose intersections reduce to the classes of associative and preassociative functions, respectively. Apart from the theoretical interest, such hierarchies can be used to measure degrees of associativeness (resp.\ preassociativeness). Indeed, by setting $d(f)=2^{-k}$ where $k$ is the minimum positive integer $m$ such that $f\in \calA_{D_m}\setminus\calA_{D_{m+1}}$ (resp.\ $f\in \calP_{D_m}\setminus\calP_{D_{m+1}}$), we see that $d$ measures how distant $f$ is from being associative (resp.\ preassociative).

In Section 3, for each nonempty $D\subseteq X^*$, we provided additional conditions that reduce classes $\calP_D$ and $\calP'_D$ to
$\calA_D$ and $\calA'_D$, respectively. These results were then complemented by factorizations of $\calP_D$ and $\calP'_D$ into composites $\calI\circ\calA_D$ and $\calI\circ\calA'_D$, where $\calI$ is a class of one-to-one functions. These factorization results may be particularly useful. Indeed, they enable us to construct functions in $\calP_D$ (resp.\ $\calP'_D$) from known functions in $\calA_D$ (resp.\ $\calA'_D$). Also, they may enable us to obtain new axiomatizations of subclasses of $\calP_D$ (resp.\ $\calP'_D$) from existing axiomatizations of subclasses of $\calA_D$ (resp.\ $\calA'_D$). This observation was already fruitfully used for $D=X^*$ (see \cite{MarTeh15}).

Regarding the idea of restricting the variables, alternative natural variants of associativity and preassociativity are to be considered. For instance, for any nonempty subset $D$ of $X^*$, consider the class of functions
$$
\calA^0_D ~=~ \{F\colon D\to X^*\mid\text{\eqref{eq:assoc} holds for all $\bfx,\bfy,\bfz \in X^*$ such that $\bfx\bfy\bfz,{\,}\bfy,{\,}\bfx F(\bfy)\bfz\in D$}\}.
$$
It is clear that if $F\in\calA_D$, then $F|_D\in\calA^0_D$. However, the converse is an open question of interest: give necessary and sufficient conditions on a nonempty subset $D\subseteq X^*$ and a function $F\in\calA^0_D$ for the existence of an extension $G\in\calA_D$ (i.e., $G|_D=F$). For example, if $\bfx D\bfz\subseteq D$ for all $\bfx,\bfz \in X^*$ and $F(D)\subseteq D$, then the function $G$ defined by $G|_D=F$ and $G|_{X^*\setminus D}=\id_{X^*\setminus D}$ is in $\calA_D$.

The same question can be addressed for the class
$$
\calP^0_D ~=~ \{F\colon D\to\ran(F)\mid\text{\eqref{eq:preassoc} holds for all $\bfx,\bfy,\bfy',\bfz \in X^*$ such that $\bfx\bfy\bfz,{\,}\bfx\bfy'\bfz,{\,}\bfy,{\,}\bfy'\in D$}\}.
$$

These questions constitute topics of ongoing research work.

%%---------------------------------------------------------------------------------------------- Acknowledgments
%\section*{Acknowledgments}
%
%
%---------------------------------------------------------------------------------------------- Bibliography

\end{document}